\newcommand\F{{\mathbb F}}
\newcommand\Q{{\mathbb Q}}
\newcommand\Z{{\mathbb Z}}
\newtheorem{theorem}{Theorem}[section]
\newtheorem{lemma}[theorem]{Lemma}
\newtheorem{corollary}[theorem]{Corollary}
\theoremstyle{definition}
\newtheorem{example}[theorem]{Example}
\theoremstyle{remark}
\numberwithin{equation}{section}
\begin{document}

\title[Exceptional units]{On the additive and multiplicative structures of the exceptional units in finite commutative rings}

\author{Su Hu}
\address{Department of Mathematics, South China University of Technology, Guangzhou 510640, China}
\email{mahusu@scut.edu.cn}

\author{Min Sha}
\address{Department of Computing, Macquarie University, Sydney, NSW 2109, Australia}
\email{shamin2010@gmail.com}

\subjclass[2010]{11B13, 11D45, 11T24}

\keywords{Exceptional unit, finite commutative ring, character sum}

\begin{abstract} 
Let $R$ be a commutative ring with identity. A unit $u$ of $R$ is called exceptional if $1-u$ is also a unit. 
When $R$ is a finite commutative ring, we determine the additive and multiplicative structures of its exceptional units; 
and then as an application we find a necessary and sufficient condition under which $R$ is generated by its exceptional units. 
\end{abstract}

\maketitle

\section{Introduction}
\label{Intro}

\subsection{Background}

Let $R$ be a commutative ring with $1\in R$ and $R^{*}$ its group of units. A unit $u\in R$ is called \textit{exceptional} if $1-u\in R^{*}$. 
We denote by $R^{**}$ the set of exceptional units of $R$. 
This concept was introduced by Nagell~\cite{Nagell} in 1969 in order to solve certain cubic Diophantine equations. 
The key idea is that the solution of many Diophantine equations can be reduced to the solution 
of a finite number of unit equations of type 
$$
ax + by = 1, 
$$
where $x$ and $y$ are restricted to units in the ring of integers of some number field (see \cite{EG} for a treatise on unit equations).  
By choosing $a=b=1$, we obtain the concept of exceptional unit. 

Let $\mathbb{Z}_{n}$ be the residue class ring of the integers $\Z$ modulo a positive integer $n$. 
Sander~\cite[Theorem 1.1]{Sander2}  determined the number of representations of an element in $\mathbb{Z}_{n}$ as the sum of two exceptional units. 
Recently, Zhang and Ji \cite[Theorem 1.5]{Zhang} extended Sander's result to the case when $R$ is a residue class ring of a number field. 
Using a different approach with the aid of exponential sums, for any integer $k\ge 2$ 
Yang and Zhao \cite[Theorem 1]{Yang} obtained an exact formula for the number of ways to represent an element of $\Z_n$ as a sum of $k$ exceptional units. 
Most recently, Miguel \cite[Theorem 1]{Miguel} generalized the result of Yang and Zhao to the case of finite commutative rings. 
Sander \cite{Sander3} persued another structural perspective by investigating the atom decomposition of sets of 
exceptional units in $\Z_n$.

\subsection{Our situation}

In this paper, we consider the additive and multiplicative structures of the exceptional units of a finite commutative ring. 
We show that as in \cite{Miguel,Yang}, there is an exact formula for  the number of ways to represent a unit as a product of $k$ exceptional units. 
As an application and in combination with Miguel's result, 
we completely determine  the additive and multiplicative structures of such exceptional units in our situation 
(see Theorems \ref{thm:eu-sum} and \ref{thm:eu-prod} below); 
and then as an application we find a necessary and sufficient condition 
under which $R$ is generated by its exceptional units (see Corollaries \ref{cor:eu-sum} and \ref{cor:exunit2} below). 

From now on, $R$ is a finite commutative ring with identity.   
It is well-known that $R$ can be uniquely expressed as a direct sum of local rings; see \cite[page 95]{Mc}. 
So, in the following we assume that 
$$
R = R_1 \oplus \cdots \oplus R_n, 
$$
where each $R_i$, for $i=1,\ldots, n$, is a local ring. 
Then, each element $c \in R$ can be represented as $(c_1, \ldots, c_n)$ with $c_i \in R_i, i=1, \ldots, n$. 
For each $i=1,\ldots, n$, suppose that  $M_i$  is the unique maximal ideal of $R_i$, and  put 
$$
m_i = |M_i| , \qquad q_i = | R_i/M_i|. 
$$ 
Here, each residue field $R_i/M_i$ is a finite field, and thus $q_i$ is a power of a prime. 

We first state our results in Sections \ref{sec:unit}, \ref{sec:eu1}, \ref{sec:eu2} and \ref{sec:app}, and then we give proofs in Section \ref{Prof}.

\subsection{Additive structure of units}   
\label{sec:unit}

Before further work, we first determine the additive structure of $R^*$. 
For $c\in R$, we define the set 
\begin{equation*}
\Psi_{k,R}(c)=\{(x_{1},\ldots,x_{k})\in (R^{*})^{k} :x_{1}+\cdots+x_{k} = c\},
\end{equation*} 
and put 
\begin{equation*}\label{phi}
\psi_{k,R}(c)=|\Psi_{k,R}(c)|.
\end{equation*} 
That is, $\psi_{k,R}(c)$ is the number of ways to represent $c$ as a sum of $k$ units. 
Kiani and Mollahajiaghaei obtained the following formula for $\psi_{k,R}(c)$ (see \cite[Theorem 2.5]{KM}), 
which generalizes the result in \cite{Sander1}.

\begin{theorem}[\cite{KM}] \label{thm:psi} 
For  any integer $k\ge 2$ and any $c=(c_1,\ldots,c_n)\in R$, we have 
$$
\psi_{k,R}(c) = \prod_{i=1}^{n}   m_i^{k-1}q_i^{-1} \mu_{k,R_i}(c_i), 
$$
where 
$$
\mu_{k,R_i}(c_i)= \begin{cases}
(q_i-1)^{k} + (-1)^{k}(q_i-1)&  \textrm{if $c_i \in M_i$,} \\
(q_i-1)^{k} + (-1)^{k+1}& \textrm{if $c_i \in R_i \setminus M_i$.} 
\end{cases}
$$
\end{theorem}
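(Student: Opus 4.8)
The plan is to reduce the problem to the local case via the direct-sum decomposition $R = R_1 \oplus \cdots \oplus R_n$, and then evaluate a character sum over each local factor. First I would observe that a tuple $(x_1,\ldots,x_k) \in R^k$ lies in $(R^*)^k$ with $x_1 + \cdots + x_k = c$ if and only if, for every $i$, the $i$-th components form a tuple in $(R_i^*)^k$ summing to $c_i$; this is because units in a direct sum are exactly the tuples of units in each factor, and addition is componentwise. Consequently $\psi_{k,R}(c) = \prod_{i=1}^n \psi_{k,R_i}(c_i)$, and it suffices to prove the formula when $R$ is a finite local ring with maximal ideal $M$, $m = |M|$, $q = |R/M|$, namely that $\psi_{k,R}(c) = m^{k-1} q^{-1} \mu_{k,R}(c)$.

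For the local case I would use additive characters of $R$. Writing $\widehat{R}$ for the group of additive characters, the standard orthogonality relation gives
\begin{equation*}
\psi_{k,R}(c) = \frac{1}{|R|} \sum_{\chi \in \widehat{R}} \overline{\chi(c)} \left( \sum_{u \in R^*} \chi(u) \right)^k.
\end{equation*}
So everything comes down to computing the Gauss-type sum $G(\chi) := \sum_{u \in R^*} \chi(u)$. For the trivial character this is $|R^*| = m^{-1}|R|(q-1)/q \cdot$ (appropriately: $|R^*| = |R| - |M| = mq - m$), hmm, more precisely $|R^*| = |R|(1 - 1/q) = m(q-1)$. For a nontrivial character $\chi$, I would split $R = M \sqcup R^*$ and write $\sum_{u\in R^*}\chi(u) = \sum_{u \in R}\chi(u) - \sum_{u\in M}\chi(u) = 0 - \sum_{u\in M}\chi(u)$; the latter sum is $|M|$ if $\chi$ restricted to $M$ is trivial and $0$ otherwise. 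Thus $G(\chi)$ takes only the values $m(q-1)$ (trivial $\chi$), $-m$ (nontrivial $\chi$ trivial on $M$, of which there are $q-1$), and $0$ (otherwise). Plugging these three cases into the orthogonality formula, separating the contribution of the trivial character from the $q-1$ nontrivial characters trivial on $M$, and using that $\overline{\chi(c)} = 1$ for all such $\chi$ precisely when $c \in M$ (and that these character values sum appropriately when $c \notin M$), yields the two-case formula for $\mu_{k,R}(c)$ after dividing by $|R| = mq$.

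The only real subtlety is the last bookkeeping step: when $c \in M$, every character trivial on $M$ satisfies $\chi(c) = 1$, giving $(q-1)\cdot(-m)^k$ from those terms plus $m^k(q-1)^k$ from the trivial character, and the total is $m^k\big((q-1)^k + (-1)^k(q-1)\big)/(mq)$; when $c \notin M$, one instead uses that the characters of $R/M$ evaluated at the nonzero image of $c$ sum to $-1$ over the $q-1$ nontrivial ones, producing $(-1)^{k+1}$ in place of $(-1)^k(q-1)$. I expect this case analysis, i.e.\ correctly identifying which characters are trivial on $M$ and tracking the sign from the odd power $(-m)^k$ against $\overline{\chi(c)}$, to be the main point requiring care; the reduction to the local case and the Gauss sum evaluation are routine. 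Since the statement is quoted from \cite{KM}, I would simply cite it, but the above is the proof I would reconstruct.
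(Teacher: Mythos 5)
Your proof is correct. Note that the paper itself gives no proof of this statement --- it is quoted from \cite{KM} --- so there is nothing to match line by line; but your argument is precisely the additive analogue of the multiplicative-character computation the paper does carry out for Theorem \ref{thm:theta}. The reduction $\psi_{k,R}(c)=\prod_i\psi_{k,R_i}(c_i)$ is right (units and sums are both componentwise), and your evaluation of $G(\chi)=\sum_{u\in R^*}\chi(u)$ checks out: the characters trivial on $M$ are exactly those factoring through $R/M$, so there are $q-1$ nontrivial ones, each contributing $(-m)^k\overline{\chi(c)}$, and the inner sum $\sum\overline{\chi(\bar c)}$ over these equals $q-1$ when $c\in M$ and $-1$ otherwise, which after dividing by $|R|=mq$ gives exactly $m^{k-1}q^{-1}\mu_{k,R}(c)$ in both cases. (Your parenthetical fumble over $|R^*|$ is harmless since you land on the correct value $m(q-1)$.) The one thing worth making explicit, which you use implicitly, is Lemma-\ref{le2}-style bookkeeping: here you avoid a separate reduction to the residue field by letting the characters trivial on $M$ do that work for you, which is arguably cleaner than the paper's two-lemma route in the multiplicative setting.
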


Theorem \ref{thm:psi} can be directly used to establish the additive structure of the units of $R$.  

\begin{corollary} \label{cor:u-sum}
The following hold: 

{\rm (i)} If $q_i>2$ for each $i=1, \ldots, n$, then we have 
$$
R^{*} + R^{*} = R.
$$

{\rm (ii)} If $q_1=\cdots =q_s=2$ $(s\ge 1)$ and $q_j>2$ for each $j>s$, 
then for any integer $k\ge 2$ we have  
$$
\sum_{i=1}^{k} R^{*} = 
\begin{cases}
(\oplus_{i=1}^{s} M_i ) \oplus  (\oplus_{j>s} R_j) & \textrm{if $k$ is even,} \\ 
(\oplus_{i=1}^{s} R_i \setminus M_i ) \oplus  (\oplus_{j>s} R_j) & \textrm{otherwise.}
\end{cases}
$$
In particular, each element of $R$ is a sum of units if and only if $s=1$. 
\end{corollary}

In Corollary \ref{cor:u-sum} (ii), if $s=n$, then the part  $\oplus_{j>s} R_j$ does not exist. 

\begin{corollary}  \label{cor:unit1}
The ring $R$ is generated by its units  if and only if there is at most one $q_i$ equal to $2$. 
\end{corollary}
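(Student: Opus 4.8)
The plan is to deduce Corollary~\ref{cor:unit1} from Theorem~\ref{thm:u-sum} after one elementary reduction. The key observation is that the subring of $R$ generated by $R^{*}$ coincides with the additive subgroup generated by $R^{*}$: a product of units is again a unit and $-1\in R^{*}$, so $R^{*}$ is closed under multiplication and under negation, whence a $\Z$-linear combination of products of units is simply a finite sum of units (and $0=1+(-1)$ is among them). Thus ``$R$ is generated by its units'' is equivalent to the purely additive statement ``every element of $R$ is a sum of units'', and it remains to decide when the latter holds.

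For the ``if'' direction, suppose at most one of the $q_{i}$ equals $2$. If all $q_{i}>2$, then Theorem~\ref{thm:u-sum}(i) gives $R=R^{*}+R^{*}$, so every element is already a sum of two units. If exactly one $q_{i}$ equals $2$, then after reindexing the local summands we are in case (ii) with $s=1$, and the last sentence of Theorem~\ref{thm:u-sum}(ii) asserts that every element of $R$ is a sum of units. In either case $R$ is generated by its units.

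For the ``only if'' direction I would argue by contraposition: assume at least two of the $q_{i}$ equal $2$, so after reindexing $s\ge 2$ in the notation of Theorem~\ref{thm:u-sum}(ii). That theorem shows that for every $k\ge 2$ a sum of $k$ units has all of its first $s$ coordinates in $M_{1},\dots,M_{s}$ (if $k$ is even) or all of them outside $M_{1},\dots,M_{s}$ (if $k$ is odd), while a single unit has all of its first $s$ coordinates outside $M_{1},\dots,M_{s}$, since $R_{i}^{*}=R_{i}\setminus M_{i}$ in a local ring. Hence no sum of units can have first coordinate in $M_{1}$ and second coordinate outside $M_{2}$; for instance the element with first coordinate $0$, second coordinate $1$, and the remaining coordinates arbitrary is not a sum of units, so $R$ is not generated by its units. (Alternatively, and perhaps more transparently, one may compose the projection $R\to R_{1}\oplus R_{2}$ with reduction modulo $M_{1}$ and $M_{2}$ to obtain a surjection $\varphi\colon R\to \F_{2}\oplus\F_{2}$; every unit of $R$ is sent to $(1,1)$, so $\varphi$ carries the subring generated by $R^{*}$ into the proper subring $\{(0,0),(1,1)\}$, which forces that subring to be proper in $R$.)

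I do not expect a genuine obstacle here: granting Theorem~\ref{thm:u-sum}, the whole content sits in the first-paragraph remark that the ring generated by $R^{*}$ is nothing more than the group of finite sums of units. The only steps needing a line of care are that remark and, in the converse, reading off an explicit element excluded from the set of sums of units by the formula in Theorem~\ref{thm:u-sum}(ii); both are routine.
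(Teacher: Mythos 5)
Your proposal is correct and follows essentially the route the paper intends: the corollary is stated there as an immediate consequence of Theorem~\ref{thm:u-sum}, resting on exactly the reduction you make explicit (the subring generated by $R^{*}$ is the set of finite sums of units, since products and negatives of units are units), together with part (i) and the ``$s=1$'' clause of part (ii). Your contrapositive argument for $s\ge 2$, in either of its two forms, just fills in the detail the paper leaves to the reader.
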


\subsection{Additive structure of exceptional units} 
\label{sec:eu1}

We now turn our attention to exceptional units. 
In Theorem \ref{thm:psi} , if we choose $k=2$ and $c=1$, then we directly get the size of $R^{**}$.

\begin{corollary}  \label{cor:eu-size}
We have 
$$
| R^{**}| = \prod_{i=1}^{n}   m_i (q_i-2). 
$$
\end{corollary}

We directly have: 

\begin{corollary}  \label{cor:exist1}
$R^{**} \ne \emptyset$ if and only if each $q_i \ge 3$, $i=1,\ldots, n$.  
\end{corollary}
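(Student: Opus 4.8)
The plan is to read this off directly from Theorem~\ref{thm:eu-size}. Since $R^{**}$ is a finite set, $R^{**}\neq\emptyset$ is equivalent to $|R^{**}|\geq 1$, i.e.\ to $|R^{**}|>0$. By Theorem~\ref{thm:eu-size} this amounts to showing that the product $\prod_{i=1}^{n} m_i(q_i-2)$ of nonnegative integers is strictly positive precisely when every factor is, and that the only factors which can vanish are the $q_i-2$.

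First I would observe that each $m_i=|M_i|\geq 1$, because $M_i$ is an ideal of $R_i$ and in particular contains $0$; hence the factors $m_i$ never cause the product to vanish. Next, each $q_i=|R_i/M_i|$ is the cardinality of a finite field, so $q_i$ is a prime power and in particular $q_i\geq 2$, giving $q_i-2\geq 0$. Therefore $\prod_{i=1}^{n} m_i(q_i-2)$ is a product of nonnegative integers, and it is positive if and only if $q_i-2\geq 1$ for every $i$, i.e.\ $q_i>2$ for all $i=1,\ldots,n$; otherwise some factor is $0$ and $|R^{**}|=0$, so $R^{**}=\emptyset$. This already yields the equivalence.

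As a sanity check (and an alternative, more structural argument if one prefers not to invoke the exact count), I would note that under the decomposition $R=R_1\oplus\cdots\oplus R_n$ a unit $u=(u_1,\ldots,u_n)$ lies in $R^{**}$ if and only if each $u_i\in R_i^{**}$, so $R^{**}\neq\emptyset$ iff $R_i^{**}\neq\emptyset$ for every $i$; and since a unit of the local ring $R_i$ is exceptional exactly when its image in the field $R_i/M_i$ is exceptional, $R_i^{**}\neq\emptyset$ iff $R_i/M_i$ contains an element different from $0$ and $1$, i.e.\ iff $q_i>2$.

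I do not expect any genuine obstacle here: the statement is a one-line consequence of Theorem~\ref{thm:eu-size}. The only point requiring a word of justification is that the factors $m_i$ are always at least $1$ (so they cannot force the product to be zero) and that $q_i\geq 2$ always holds (so $q_i-2$ is nonnegative); both are immediate.
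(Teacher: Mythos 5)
Your proposal is correct and matches the paper's approach: the paper derives this corollary directly from the size formula $|R^{**}|=\prod_{i=1}^{n} m_i(q_i-2)$ of Theorem~\ref{thm:eu-size}, exactly as you do. Your added remarks that $m_i\ge 1$ and $q_i\ge 2$ are the right (routine) justifications, and the structural sanity check is consistent but not needed.
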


For $c\in R$, define the set 
\begin{equation*}
\Phi_{k,R}(c)=\{(x_{1},\ldots,x_{k})\in (R^{**})^{k} :x_{1}+\cdots+x_{k} = c \},
\end{equation*} 
and denote 
\begin{equation*}\label{phi}
\varphi_{k,R}(c)=|\Phi_{k,R}(c)|.
\end{equation*} 
That is, $\varphi_{k,R}(c)$ is the number of ways to represent $c$ as a sum of $k$ exceptional units. 
Miguel gave an exact formula for $\varphi_{k,R}(c)$; see \cite[Theorem 1]{Miguel}.

\begin{theorem}[\cite{Miguel}]\label{thm:phi} 
For  any integer $k\ge 2$ and any $c=(c_1,\ldots,c_n)\in R$, we have 
$$
\varphi_{k,R}(c) =  \prod_{i=1}^{n} (-1)^k m_i^{k-1}q_i^{-1} \rho_{k,R_i}(c_i),
$$
where 
$$
 \rho_{k,R_i}(c_i) = q_i \sum_{\substack{j=0 \\ j \equiv c_i \,\textrm{\rm (mod $M_i$)}}}^{k}\binom{k}{j} +(2-q_i)^{k}-2^k.
$$
\end{theorem}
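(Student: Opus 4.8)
The plan is to reduce to the local case, evaluate everything modulo the maximal ideal, and finish with a short inclusion--exclusion over the residue field. Since $R = R_1 \oplus \cdots \oplus R_n$ and an element $(u_1,\dots,u_n)$ is a unit of $R$ exactly when each $u_i$ is a unit of $R_i$ --- and likewise for $1-u$ --- we have $R^{**} = R_1^{**} \times \cdots \times R_n^{**}$, hence $\Phi_{k,R}(c) = \prod_{i=1}^n \Phi_{k,R_i}(c_i)$ and so $\varphi_{k,R}(c) = \prod_{i=1}^n \varphi_{k,R_i}(c_i)$. It therefore suffices to treat a single local ring $R$ with maximal ideal $M$, $m = |M|$, $q = |R/M|$, and to show $\varphi_{k,R}(c) = (-1)^k m^{k-1} q^{-1} \rho_{k,R}(c)$.

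For the local case, the key observation is that in a local ring the units are precisely the elements outside $M$; thus $u$ is an exceptional unit iff $u \bmod M \notin \{0,1\}$, i.e.\ $R^{**}$ is the full preimage under the reduction map $\pi : R \to R/M \cong \F_q$ of $\F_q \setminus \{0,1\}$. In particular $R^{**}$ is a disjoint union of cosets of the additive group $M$, and a representation $x_1 + \cdots + x_k = c$ with $x_j \in R^{**}$ is obtained by first choosing residues $\pi(x_j) = a_j \in \F_q\setminus\{0,1\}$ with $a_1 + \cdots + a_k = \pi(c)$, then lifting $x_1,\dots,x_{k-1}$ arbitrarily inside their classes ($m$ choices each) and letting $x_k$ be forced. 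Hence
$$\varphi_{k,R}(c) = m^{k-1}\, N(\bar c), \qquad N(\bar c) := \#\bigl\{(a_1,\dots,a_k)\in(\F_q\setminus\{0,1\})^k : a_1+\cdots+a_k=\bar c\bigr\},$$
where $\bar c = \pi(c)$. (Equivalently, one may run additive characters of $R$ directly: those nontrivial on $M$ contribute nothing since $R^{**}$ is a union of $M$-cosets, and the rest collapse to the same count over $\F_q$.)

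It remains to compute $N(\bar c)$, and for this I would use the pointwise identity $\mathbf 1_{\F_q\setminus\{0,1\}} = 1 - \mathbf 1_{\{0\}} - \mathbf 1_{\{1\}}$, insert it in each of the $k$ factors, and expand. A term of the expansion is indexed by a pair of disjoint subsets $S_0, S_1 \subseteq \{1,\dots,k\}$ --- the coordinates pinned to $0$, respectively to $1$ --- with sign $(-1)^{|S_0|+|S_1|}$; after summing the $t := k - |S_0| - |S_1|$ free coordinates against the target $\bar c$, such a term contributes $q^{\,t-1}$ when $t \ge 1$, and $1$ or $0$ according to whether $|S_1| \equiv c \pmod M$ when $t = 0$. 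Grouping the $t\ge1$ terms by the multinomial coefficient $\binom{k}{|S_0|,|S_1|,t}$, applying the trinomial theorem to $(1+1+(-q))^k$, and removing the $t=0$ slice $2^k$ extracts $(-1)^k q^{-1}\bigl((2-q)^k-2^k\bigr)$, while the $t=0$ terms themselves sum to $(-1)^k\sum_{0\le j\le k,\ j\equiv c\,(M)}\binom{k}{j}$; adding these gives $N(\bar c) = (-1)^k q^{-1}\rho_{k,R}(c)$. Multiplying by $m^{k-1}$ and taking the product over $i$ yields the theorem.

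The only delicate point is the bookkeeping in the last step: one must isolate the degenerate case $t=0$ --- precisely where the residue of $c$, and hence the binomial sum $\sum_{j\equiv c}\binom{k}{j}$, enters --- from the generic case, and keep the signs and powers of $q$ aligned so that the trinomial theorem applies cleanly; everything else (the splitting into local rings, the coset structure of $R^{**}$) is routine. As a sanity check, taking $k=2$ and $c=1$ the formula should reproduce $\prod_i m_i(q_i-2) = |R^{**}|$ of Theorem~\ref{thm:eu-size}, consistent with the fact that $u\mapsto 1-u$ is an involution of $R^{**}$.
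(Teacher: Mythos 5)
Your argument is correct, and it is worth noting that the paper itself does not prove this statement --- it is quoted from Miguel's paper --- so the natural comparison is with the paper's proof of the parallel multiplicative result, Theorem~\ref{thm:theta}, which the authors obtain ``by the same arguments as in \cite{Miguel}''. You share the first two reductions with that template: the splitting $R^{**}=R_1^{**}\times\cdots\times R_n^{**}$ (Lemma~\ref{le1}) and the descent to the residue field with the factor $m^{k-1}$ coming from the coset structure of $R^{**}$ over $M$ (Lemma~\ref{le2}). Where you genuinely diverge is the final count over $\F_q$: the paper's method runs an orthogonality-of-characters computation (multiplicative characters for Theorem~\ref{thm:theta}, and additive characters in Miguel's additive setting), whereas you expand $\mathbf 1_{\F_q\setminus\{0,1\}}=1-\mathbf 1_{\{0\}}-\mathbf 1_{\{1\}}$ and apply inclusion--exclusion plus the trinomial theorem. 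I checked the bookkeeping: the $t\ge 1$ terms give $q^{-1}\bigl((q-2)^k-(-1)^k2^k\bigr)=(-1)^kq^{-1}\bigl((2-q)^k-2^k\bigr)$ and the $t=0$ terms give $(-1)^k\sum_{j\equiv c}\binom{k}{j}$, which together yield exactly $(-1)^kq^{-1}\rho_{k,R_i}(c_i)$, and your $k=2$, $c=1$ sanity check against Theorem~\ref{thm:eu-size} goes through. The character approach buys uniformity (orthogonality automates the case analysis, and the same machinery handles sums and products), while your route is more elementary and makes transparent \emph{why} the congruence sum $\sum_{j\equiv c_i\,(\mathrm{mod}\ M_i)}\binom{k}{j}$ appears: it counts the fully pinned configurations where every coordinate is forced to $0$ or $1$.
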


Using Theorem \ref{thm:phi}, we completely determine  the additive structure of $R^{**}$ in the following theorem. 
Note that by Corollary \ref{cor:exist1} we need to exclude the case when there is some $q_i$ equal to $2$. 

\begin{theorem} \label{thm:eu-sum}
Assume that each $q_i \ge 3$, $i=1,\ldots, n$. Then, the following hold: 

{\rm (i)} If each $q_i$ is greater than $4$, $i=1, \ldots, n$, we have 
$$
R^{**} + R^{**} = R. 
$$

{\rm (ii)} If $q_1=\cdots =q_s=3$ $(s\ge 1)$ and $q_j > 4$ for each $j>s$,  
then for any $k\ge 2$ we have 
$$
\sum_{i=1}^{k} R^{**} = 
\begin{cases}
(\oplus_{i=1}^{s} M_i)  \oplus (\oplus_{j>s} R_j) & \textrm{if $k \equiv 0 $ {\rm (mod $3$)},} \\
(\oplus_{i=1}^{s} 2+M_i)  \oplus (\oplus_{j>s} R_j) & \textrm{if $k \equiv 1$ {\rm (mod $3$)},} \\
(\oplus_{i=1}^{s} 1+M_i)  \oplus (\oplus_{j>s} R_j) & \textrm{if $k \equiv 2$ {\rm (mod $3$)}.}
\end{cases}
$$

{\rm (iii)} If $q_1=\cdots =q_t=4$ $(t \ge 1)$ and $q_j > 4$  for each $j>t$,  
then for any $k\ge 2$ we have 
$$
\sum_{i=1}^{k} R^{**} = 
\begin{cases}
(\oplus_{i=1}^{t}(M_i \cup 1+M_i)) \oplus (\oplus_{j>t} R_j)  & \textrm{if $k$ is even,} \\
(\oplus_{i=1}^{t}(R_i \setminus (M_i \cup 1+M_i))) \oplus (\oplus_{j>t} R_j)  & \textrm{otherwise.}
\end{cases}
$$

{\rm (iv)} Assume that $q_1= \cdots =q_s=3$ $(s\ge 1)$,  $q_{s+1}= \cdots =q_{s+t}=4$ $(t \ge 1)$, 
and $q_j > 4$  for each $j>s+t$. 
Then, for any $k\ge 2$ we have 
\begin{align*}
 \sum_{i=1}^{k} R^{**} 
 = \begin{cases}
(\oplus_{i=1}^{s} M_i)  \oplus (\oplus_{i=s+1}^{s+t}(M_i \cup 1+M_i)) \oplus (\oplus_{j>s+t} R_j) \\
   \qquad\qquad\qquad\qquad\qquad\qquad\qquad\qquad  \textrm{if $k \equiv 0 $ {\rm (mod $6$)},} \\
(\oplus_{i=1}^{s} 2+M_i)  \oplus (\oplus_{i=s+1}^{s+t}(R_i \setminus (M_i \cup 1+M_i))) \oplus (\oplus_{j>s+t} R_j) \\
\qquad\qquad\qquad\qquad\qquad\qquad\qquad\qquad \textrm{if $k \equiv 1$ {\rm (mod $6$)},} \\
(\oplus_{i=1}^{s} 1+M_i)  \oplus (\oplus_{i=s+1}^{s+t}(M_i \cup 1+M_i)) \oplus (\oplus_{j>s+t} R_j)  \\
\qquad\qquad\qquad\qquad\qquad\qquad\qquad\qquad  \textrm{if $k \equiv 2$ {\rm (mod $6$)},} \\
(\oplus_{i=1}^{s} M_i)  \oplus (\oplus_{i=s+1}^{s+t}(R_i \setminus (M_i \cup 1+M_i))) \oplus (\oplus_{j>s+t} R_j)  \\
\qquad\qquad\qquad\qquad\qquad\qquad\qquad\qquad  \textrm{if $k \equiv 3$ {\rm (mod $6$)},} \\
(\oplus_{i=1}^{s} 2+M_i)  \oplus (\oplus_{i=s+1}^{s+t}(M_i \cup 1+M_i)) \oplus (\oplus_{j>s+t} R_j)  \\ 
\qquad\qquad\qquad\qquad\qquad\qquad\qquad\qquad \textrm{if $k \equiv 4 $ {\rm (mod $6$)},} \\
(\oplus_{i=1}^{s} 1+M_i)  \oplus (\oplus_{i=s+1}^{s+t}(R_i \setminus (M_i \cup 1+M_i))) \oplus (\oplus_{j>s+t} R_j)  \\
\qquad\qquad\qquad\qquad\qquad\qquad\qquad\qquad  \textrm{if $k \equiv 5$ {\rm (mod $6$)}.}
\end{cases}
\end{align*}
\end{theorem}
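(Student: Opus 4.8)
The plan is to reduce everything to the local components via the direct sum decomposition $R = R_1 \oplus \cdots \oplus R_n$, together with the observation that $(x_1,\dots,x_n) \in R$ is a sum of $k$ exceptional units of $R$ if and only if each coordinate $x_i$ is a sum of $k$ exceptional units of $R_i$; equivalently, $\sum_{i=1}^k R^{**} = (\sum_{i=1}^k R_1^{**}) \oplus \cdots \oplus (\sum_{i=1}^k R_n^{**})$. So the whole theorem follows once we know, for a single local ring $R_i$, exactly which elements $c_i$ satisfy $\varphi_{k,R_i}(c_i) > 0$, as a function of $q_i$ and $k \bmod \mathrm{lcm}$-of-small-periods. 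The four cases (i)--(iv) are then just bookkeeping: combine the local answer for $q_i = 3$ (period $3$ in $k$), the local answer for $q_i = 4$ (period $2$ in $k$), and the local answer for $q_i > 4$ (where, as we will show, \emph{every} element of $R_i$ is attained for every $k \ge 2$), taking $\mathrm{lcm}(3,2)=6$ in case (iv).

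The heart of the matter is therefore the following local computation: for a finite local ring $(R_i, M_i)$ with residue field of size $q_i > 2$, determine the set $S_{k}(R_i) := \{c_i \in R_i : \varphi_{k,R_i}(c_i) > 0\}$. Here I would use Theorem \ref{thm:phi}, which gives
$$
\varphi_{k,R_i}(c_i) = (-1)^k m_i^{k-1} q_i^{-1} \rho_{k,R_i}(c_i), \qquad
\rho_{k,R_i}(c_i) = q_i \!\!\!\sum_{\substack{0 \le j \le k \\ j \equiv c_i \ (\mathrm{mod}\ M_i)}}\!\!\! \binom{k}{j} + (2-q_i)^k - 2^k.
$$
Since $m_i, q_i > 0$, the sign $(-1)^k$ in front is compensated by the claim that $\rho_{k,R_i}(c_i)$ always has sign $(-1)^k$ (which one checks is forced by the binomial identity $\sum_{j} \binom{k}{j} x^j$ evaluated at roots of unity), so $\varphi_{k,R_i}(c_i) > 0 \iff (-1)^k \rho_{k,R_i}(c_i) > 0$. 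The quantity $c_i \bmod M_i$ lands in the residue field $\F_{q_i}$, and the inner binomial sum depends only on the residue of $c_i$ modulo $M_i$ together with the characteristic $p_i$ of that field. I would split:
\begin{itemize}
\item[$\bullet$] If $q_i > 4$: show the dominant term $(q_i-2)^k$ (from $\varphi_{k,R_i}$ being, up to the product structure, close to $(q_i-2)^k m_i^{k-1}/q_i$, matching Theorem \ref{thm:eu-size} when $k=2$) forces $\varphi_{k,R_i}(c_i) > 0$ for \emph{all} $c_i$ and all $k \ge 2$. Concretely, bound $|(2-q_i)^k - 2^k|$ and the off-residue binomial contributions against $q_i\binom{k}{\lfloor k/2\rfloor}$-type terms; the slack $q_i - 2 \ge 3$ is what makes every residue attainable.
\item[$\bullet$] If $q_i = 3$: the residue field is $\F_3$, characteristic $3$, and $\sum_{j \equiv r\ (3)} \binom{k}{j}$ is governed by $(1+\omega)^k$ with $\omega$ a primitive cube root of unity; a direct evaluation shows $\rho_{k,R_i}(c_i) > 0$ (in sign $(-1)^k$) exactly when $c_i \bmod M_i$ equals the residue $2k \bmod 3$, i.e. $c_i \in (2k \bmod 3) + M_i$. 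This yields the three subcases $M_i$, $2+M_i$, $1+M_i$ according to $k \equiv 0,1,2 \pmod 3$.
\item[$\bullet$] If $q_i = 4$: the residue field is $\F_4$, characteristic $2$, and $\sum_{j \equiv r\ (M_i)} \binom{k}{j} \bmod 2$ is controlled by Lucas' theorem / the identity $(1+x)^k$ over $\F_2$; one finds the attainable residues are precisely the coset $\{0,1\} = M_i \cup (1+M_i)$ (as a subset of $R_i$) when $k$ is even and its complement $R_i \setminus (M_i \cup (1+M_i))$ when $k$ is odd.
\end{itemize}

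The main obstacle is the $q_i > 4$ case: one must show \emph{non-vanishing for every residue}, which requires a genuine inequality argument rather than an exact root-of-unity evaluation, since $q_i$ is an arbitrary prime power $> 4$ and the binomial sums $\sum_{j \equiv r} \binom{k}{j}$ for different residues $r$ can differ substantially when $k$ is small relative to $q_i$. The cleanest route is probably to note that $\rho_{k,R_i}(c_i) + (q_i - c_i\text{-term})$ telescopes: writing $A_r = \sum_{j \equiv r\ (M_i)}\binom{k}{j}$ we have $\sum_r A_r = 2^k$ and $A_0 \ge A_r$ is not always true, so instead bound $q_i A_r \ge q_i \cdot 2^k/q_i^{??}$ crudely and absorb $(2-q_i)^k - 2^k$, using $|2-q_i|^k \le (q_i-2)^k$ and $(q_i - 2)^k < \binom{k}{j_0}$ for the modal $j_0$ once $q_i \ge 5$ and $k$ is not too large — the edge cases (small $k$, e.g. $k=2,3$) I would check by hand from the explicit $\rho$ formula. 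Once the local classification is in hand, assembling (i)--(iv) and the "if and only if $s=1$" / "$s=t=1$" clauses is immediate: the full sumset equals $R$ iff \emph{no} coordinate is confined to a proper subset, which happens exactly when at most one small residue field of each problematic size ($3$ or $4$) is present and, in case (iv), exactly one of each — because with $s=t=1$ the periods $3$ and $2$ combine so that over the six residue classes of $k \bmod 6$ the single $q=3$ coordinate and the single $q=4$ coordinate jointly never cover all of $R_1 \oplus R_2$ in any one $k$, matching the displayed table.
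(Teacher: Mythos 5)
Your overall skeleton --- reduce to the local components, read off from Theorem \ref{thm:phi} exactly which residues $c_i \bmod M_i$ give $\rho_{k,R_i}(c_i)\neq 0$, and treat $q_i=3$, $q_i=4$ and $q_i>4$ separately --- is the same as the paper's, and your local answers for $q_i=3$ (the coset $2k+M_i$) and $q_i=4$ ($M_i\cup(1+M_i)$ for even $k$, its complement for odd $k$) are correct and match the paper's computations. One small correction: $\rho_{k,R_i}(c_i)$ does not ``always have sign $(-1)^k$''; it is either zero or of sign $(-1)^k$, and deciding when it vanishes is exactly the content of the theorem.

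The genuine gap is the case you yourself flag as the main obstacle: showing that for $q_i>4$ every element of $R_i$ is a sum of $k$ exceptional units for \emph{every} $k\ge 2$. The strategy you sketch --- lower-bound $q_iA_r$ with $A_r=\sum_{j\equiv c_i}\binom{k}{j}$ and absorb $(2-q_i)^k-2^k$ --- fails as stated, because $A_r$ can equal $0$ (e.g.\ $q_i=7$, $k=3$, $c_i\equiv 5 \pmod{M_i}$), and you explicitly defer the small-$k$ cases. Writing $(-1)^k\rho_{k,R_i}(c_i)=(-1)^kq_iA_r+(q_i-2)^k+(-1)^{k+1}2^k$ shows that for even $k$ positivity is immediate from $(q_i-2)^k>2^k$, while for odd $k$ what is needed is an \emph{upper} bound $q_iA_r<(q_i-2)^k+2^k$, not a lower bound; this can be pushed through with root-of-unity estimates, but it is unnecessary. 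The missing idea is that only $k=2$ needs to be computed: $\rho_{2,R_i}(c_i)\ge q_i^2-4q_i>0$ for $q_i>4$ gives $R_i^{**}+R_i^{**}=R_i$, and then for any fixed $u\in R_i^{**}$ and any $k\ge 2$ one has $\sum_{i=1}^{k}R_i^{**}\supseteq \big(R_i^{**}+R_i^{**}\big)+(k-2)u=R_i$. (The paper's variant of this is to compute only $k=2$ and $k=3$ and combine them via $k=2a+3b$, which is also what makes the general-$k$ statements in parts (ii)--(iv) follow from the two displayed identities.) Separately, your justification of the final ``if and only if'' clauses conflates the sumset for a fixed $k$ (which in cases (ii)--(iv) is a proper subset of $R$ for every single $k$) with the union over all $k$: the correct point is that for $s=t=1$ the pair $(k\bmod 3,\,k\bmod 2)$ runs over all six values, so the union over $k$ is $R$, whereas for $s\ge 2$ (or $t\ge 2$) two coordinates are locked to the same coset for every $k$, so some elements are never represented.
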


Using Theorem \ref{thm:eu-sum}, we directly obtain 
a necessary and sufficient condition under which every element of $R$ is a sum of its exceptional units. 

\begin{corollary}  \label{cor:eu-sum}
Assume that each $q_i \ge 3$, $i=1,\ldots, n$.  
Then, every element of $R$ is a sum of its exceptional units if and only if 
there is at most one $q_i$ equal to $3$ and at most one $q_j$ equal to $4$ 
for $i,j=1, \ldots, n$.  
\end{corollary}

\subsection{Multiplicative structure of exceptional units}
\label{sec:eu2}

Here, we want to determine the multiplicative structure of exceptional units of $R$.

For a unit $u \in R^*$, define the set 
\begin{equation*}
\Theta_{k,R}(u)=\{(x_{1},\ldots,x_{k})\in (R^{**})^{k} :x_{1}x_2 \cdots x_{k} = u\},
\end{equation*} 
and denote 
\begin{equation*}\label{theta}
\theta_{k,R}(u)=|\Theta_{k,R}(u)|.
\end{equation*} 
That is, $\theta_{k,R}(u)$ is the number of ways to represent $u$ as a product of $k$ exceptional units. 

By Corollary \ref{cor:exist1}, if there is some $q_i=2$, then $R^{**}=\emptyset$. 
Certainly we need to exclude this case. 
Applying the same arguments  as in \cite{Miguel}, we obtain an exact formula for $\theta_{k,R}(u)$. 

\begin{theorem} \label{thm:theta} 
Assume that each $q_i \ge 3$, $i=1, \ldots, n$. 
Then, for  any integer $k\ge 2$  and any $u = (u_1, \ldots, u_n) \in R^*$, we have 
$$
\theta_{k,R}(u) = \prod_{i=1}^{n} m_i^{k-1} (q_i-1)^{-1} \sigma_{k,R_i}(u_i), 
$$
where 
$$
\sigma_{k,R_i}(u_i)= \begin{cases}
(q_i-2)^{k} + (-1)^{k}(q_i-2) &  \textrm{if $u_i \in 1+ M_i$,} \\
(q_i-2)^{k} + (-1)^{k+1} & \textrm{if $u_i \not\in 1+ M_i$.}  \\
\end{cases}
$$
\end{theorem}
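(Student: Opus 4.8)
The plan is to reduce to the local case and then count via a character-sum (exponential-sum) argument, exactly paralleling Miguel's treatment of the additive problem. First, since $R = R_1 \oplus \cdots \oplus R_n$ and both $R^{**}$ and the equation $x_1 \cdots x_k = u$ are componentwise, we have a product decomposition $\theta_{k,R}(u) = \prod_{i=1}^{n} \theta_{k,R_i}(u_i)$, so it suffices to prove the formula when $R$ is a finite local ring with maximal ideal $M$, $m = |M|$, $q = |R/M|$, and $q > 2$ (so $R^{**} \neq \emptyset$ by Corollary~\ref{cor:exist1}). In this local setting I want to show
$$
\theta_{k,R}(u) = m^{k-1}(q-1)^{-1}\bigl((q-2)^k + (-1)^k \varepsilon(u)\bigr),
$$
where $\varepsilon(u) = q-2$ if $u \in 1+M$ and $\varepsilon(u) = -1$ otherwise.

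The main step is to set up an indicator for ``$x$ is an exceptional unit'' in terms of characters of the group $R^*$, so that the product condition $x_1 \cdots x_k = u$ can be detected by orthogonality. Concretely, $x \in R^{**}$ iff $x \in R^*$ and $x \notin 1+M$, i.e. $x-1 \in R^*$. Write $R^{**} = R^* \setminus (1+M)$; note $1+M$ is a subgroup of $R^*$ of index $q-1$ and order $m$. For a fixed unit $u$, orthogonality of characters of the finite abelian group $R^*$ gives
$$
\theta_{k,R}(u) = \frac{1}{|R^*|}\sum_{\chi}\overline{\chi(u)}\Bigl(\sum_{x \in R^{**}}\chi(x)\Bigr)^{k},
$$
the sum running over all characters $\chi$ of $R^*$. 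The inner sum $S(\chi) := \sum_{x \in R^{**}}\chi(x) = \sum_{x \in R^*}\chi(x) - \sum_{x \in 1+M}\chi(x)$. The first term is $|R^*| = m(q-1)$ if $\chi$ is trivial and $0$ otherwise; the second term is $m$ if $\chi$ is trivial on $1+M$ and $0$ otherwise. So $S(\chi)$ takes only three values: $S(\chi) = m(q-1) - m = m(q-2)$ when $\chi$ is trivial; $S(\chi) = -m$ when $\chi$ is nontrivial but trivial on $1+M$ (there are $q-2$ such $\chi$); and $S(\chi) = 0$ for all remaining $\chi$. The characters that are trivial on $1+M$ are precisely those factoring through $R^*/(1+M) \cong (R/M)^*$, a cyclic group of order $q-1$.

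It remains to substitute and evaluate. Separating out the trivial character and the $q-2$ nontrivial characters that factor through $(R/M)^*$ (the rest contribute $0$), we get
$$
\theta_{k,R}(u) = \frac{1}{m(q-1)}\Bigl( m^k(q-2)^k + (-m)^k \sum_{\chi \neq 1,\ \chi|_{1+M}=1}\overline{\chi(u)}\Bigr).
$$
The remaining character sum is over the nontrivial characters of the cyclic group $(R/M)^*$ of order $q-1$ evaluated at the image $\bar u$ of $u$: it equals $q-2$ if $\bar u = 1$ (i.e. $u \in 1+M$) and equals $-1$ otherwise, again by orthogonality (full sum $q-1$ minus the trivial term $1$, or full sum $0$ minus $1$). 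This yields $\theta_{k,R}(u) = m^{k-1}(q-1)^{-1}\bigl((q-2)^k + (-1)^k\varepsilon(u)\bigr)$, which is exactly $\sigma_{k,R_i}$ after identifying $1+M$-membership. Taking the product over $i=1,\dots,n$ completes the proof.

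The one point requiring mild care — and the analogue of the only real obstacle — is justifying that the indicator-via-characters manipulation is valid, i.e. that $R^*$ is a finite abelian group so character orthogonality applies, and that $1+M$ is genuinely a subgroup with $R^*/(1+M)\cong (R/M)^*$; this is standard for finite local rings (the reduction map $R^* \to (R/M)^*$ is surjective with kernel $1+M$), so no new ideas beyond bookkeeping are needed, and everything else is the routine computation of three Gauss-type sums indicated above.
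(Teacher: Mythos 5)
Your proof is correct, and it uses the same underlying tool as the paper --- orthogonality of multiplicative characters to detect the condition $x_1\cdots x_k=u$ --- but it organizes the reduction differently. The paper first proves two lemmas: the componentwise factorization $\theta_{k,R}(u)=\prod_i\theta_{k,R_i}(u_i)$ (your first step, identical) and then a lifting lemma $\theta_{k,R_i}(u_i)=m_i^{k-1}\theta_{k,R_i/M_i}(u_i)$, after which the character sum is carried out over the characters of the residue field $\F_{q}^*$, using $\F_q^{**}=\F_q^*\setminus\{1\}$. You instead run the character sum directly over the full character group of $R_i^*$, using $R_i^{**}=R_i^*\setminus(1+M_i)$ and the fact that $1+M_i$ is the kernel of the surjection $R_i^*\to (R_i/M_i)^*$; the factor $m_i^{k-1}$ then falls out of the values $m(q-2)$ and $-m$ of your sums $S(\chi)$ rather than being supplied by a separate lemma. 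The computations check out (the three values of $S(\chi)$, the count $q-2$ of nontrivial characters trivial on $1+M_i$, and the final orthogonality step on $(R_i/M_i)^*$ are all right), so your version is self-contained where the paper defers the proof of its Lemma \ref{le2} to Miguel's arguments; the paper's route, on the other hand, isolates the residue-field computation in a form that is slightly cleaner to state and reuse.
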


Then, we can determine the multiplicative structure of $R^{**}$.

\begin{theorem} \label{thm:eu-prod}
 The following hold: 

{\rm (i)} If each $q_i>3$, $i=1,\ldots, n$, then we have 
$$
R^{**} \cdot  R^{**} = R^*.
$$

{\rm (ii)} 
If $q_1=\cdots =q_s=3$ $(s\ge 1)$ and $q_j > 3$ for each $j>s$, then for any integer $k\ge 2$ we have  
$$
\prod_{i=1}^{k} R^{**} = 
\begin{cases}
(\oplus_{i=1}^{s} 1+M_i)  \oplus (\oplus_{j>s} R_j^*) & \textrm{if $k$ is even,} \\ 
(\oplus_{i=1}^{s} R_i^* \setminus 1+M_i)  \oplus (\oplus_{j>s} R_j^*) & \textrm{otherwise.}
\end{cases}
$$
In particular, every unit of $R$ is a product of its exceptional units  if and only if $s=1$. 
\end{theorem}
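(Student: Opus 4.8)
The plan is to deduce everything from the exact formula $\theta_{k,R}(u)=\prod_{i=1}^n m_i^{k-1}(q_i-1)^{-1}\sigma_{k,R_i}(u_i)$ of Theorem \ref{thm:theta}, exactly in the same spirit as one would derive Theorem \ref{thm:eu-sum} from Theorem \ref{thm:phi}. Since $m_i^{k-1}(q_i-1)^{-1}>0$ always, for a given $u=(u_1,\dots,u_n)\in R^*$ we have $\theta_{k,R}(u)\neq 0$ if and only if $\sigma_{k,R_i}(u_i)\neq 0$ for every $i$; and $\prod_{i=1}^k R^{**}$ is precisely the set of units $u$ with $\theta_{k,R}(u)\neq 0$ (note $\prod_{i=1}^k R^{**}\subseteq R^*$ since a product of units is a unit). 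So the whole theorem reduces to a componentwise analysis: for each local ring $R_i$ with residue field size $q_i>2$, decide for which residues of $u_i$ modulo $M_i$ one has $\sigma_{k,R_i}(u_i)\neq 0$.

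First I would handle the case $q_i\ge 4$. Then $q_i-2\ge 2$, so $(q_i-2)^k$ dominates: $(q_i-2)^k+(-1)^k(q_i-2)>0$ and $(q_i-2)^k+(-1)^{k+1}\ge (q_i-2)^k-1>0$, hence $\sigma_{k,R_i}(u_i)>0$ for every $u_i\in R_i^*$ and every $k\ge 2$. This immediately gives part (i): if every $q_i>3$ then $\theta_{k,R}(u)>0$ for all $u\in R^*$, so in particular $R^{**}\cdot R^{**}=R^*$ (the $k=2$ case), and more generally $\prod_{i=1}^k R^{**}=R^*$. Next I would treat $q_i=3$. Then $q_i-2=1$, and $\sigma_{k,R_i}(u_i)=1+(-1)^k$ if $u_i\in 1+M_i$ and $\sigma_{k,R_i}(u_i)=1+(-1)^{k+1}$ if $u_i\notin 1+M_i$; so when $k$ is even $\sigma_{k,R_i}\neq 0$ exactly on $1+M_i$, and when $k$ is odd $\sigma_{k,R_i}\neq 0$ exactly on $R_i^*\setminus(1+M_i)$. (Here $R_i^*=R_i\setminus M_i$ and $|1+M_i|=m_i$.) For part (ii), with $q_1=\dots=q_s=3$ and $q_j>3$ for $j>s$, combining the two cases gives that $u=(u_1,\dots,u_n)$ lies in $\prod_{i=1}^k R^{**}$ iff $u_j\in R_j^*$ freely for $j>s$ while for $i\le s$ we need $u_i\in 1+M_i$ ($k$ even) or $u_i\in R_i^*\setminus(1+M_i)$ ($k$ odd); this is exactly the claimed direct-sum description $(\oplus_{i=1}^s 1+M_i)\oplus(\oplus_{j>s}R_j^*)$ resp.\ $(\oplus_{i=1}^s R_i^*\setminus 1+M_i)\oplus(\oplus_{j>s}R_j^*)$.

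Finally, for the "in particular" clause in (ii): every unit of $R$ is a product of (any number of) exceptional units iff $\bigcup_{k\ge 2}\prod_{i=1}^k R^{**}=R^*$. If $s=1$ the $k=2$ and $k=3$ sets already cover $(1+M_1)\oplus(\oplus_{j>1}R_j^*)$ and $(R_1^*\setminus 1+M_1)\oplus(\oplus_{j>1}R_j^*)$, whose union is $R^*$. If $s\ge 2$, then in every component $i\le s$ the attainable set is a proper subset of $R_i^*$ ($1+M_i$ or its complement, each of size $m_i<(q_i-1)m_i=|R_i^*|$), and since $\prod_{i=1}^k R^{**}$ is always a genuine direct sum over the components, no single $k$, nor the union over $k$, can be all of $R^*$ — one cannot simultaneously hit $1+M_1$ in the first slot and its complement in the second with one value of $k$. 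The only mild subtlety to spell out is that these componentwise "attainable sets" really do assemble into a Cartesian product, i.e.\ that $\theta_{k,R}(u)\neq 0$ is equivalent to the conjunction of $\sigma_{k,R_i}(u_i)\neq 0$ over $i$; but that is transparent from the product formula in Theorem \ref{thm:theta} together with the positivity of the prefactors. I do not anticipate any real obstacle here: once Theorem \ref{thm:theta} is in hand, the argument is a short sign-chasing exercise, the only care needed being the bookkeeping of which residue class survives for $q_i=3$ depending on the parity of $k$.
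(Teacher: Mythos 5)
Your proposal is correct and follows essentially the same route as the paper: both deduce everything from the product formula of Theorem \ref{thm:theta}, observing that the prefactors are positive so that membership in $\prod_{i=1}^k R^{**}$ reduces to the nonvanishing of each $\sigma_{k,R_i}(u_i)$, with $\sigma_{2,R_i}(u_i)>0$ for $q_i>3$ giving (i) and the parity computation $\sigma_{k,R_i}(u_i)=1+(-1)^k$ on $1+M_i$ versus $1+(-1)^{k+1}$ off it giving (ii) when $q_i=3$. Your treatment of the ``in particular'' clause is slightly more explicit than the paper's, but the substance is identical.
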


Finally, we determine under which condition the ring $R$ can be generated by its exceptional units 
(that is, every element of $R$ can be represented as either a sum of exceptional units or a sum of products of exceptional units). 

\begin{corollary}  \label{cor:exunit2}
Assume that each $q_i \ge  3$, $i=1, \ldots, n$. 
Then, the ring $R$ is generated by its exceptional units  if and only if there is at most one $q_i$ equal to $3$. 
\end{corollary}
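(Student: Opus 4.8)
The plan is to reduce the statement to the two structural results already available, namely Theorem \ref{thm:u-sum} (additive structure of units) and Theorem \ref{thm:eu-sum} (additive structure of exceptional units), together with Theorem \ref{thm:eu-prod} (multiplicative structure of exceptional units). The subring of $R$ generated by $R^{**}$ is the set of all finite sums of products of exceptional units; since $R^{**}\cdot R^{**}\subseteq R^*$ and more generally products of exceptional units are units, this subring equals the additive span $\sum_{k\ge 1}\sum_{i=1}^{k}(\text{products of $k$ exceptional units})$, which (using that the set of products of exceptional units is closed under multiplication and contains $1$, because $q_i\ge 3$ forces $R^{**}\ne\emptyset$ by Corollary \ref{cor:exist1} and one checks $1$ is such a product) is the same as the additive subgroup generated by the multiplicative closure $\langle R^{**}\rangle_{\times}$ of $R^{**}$. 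So the first step is to identify $\langle R^{**}\rangle_{\times}$ componentwise from Theorem \ref{thm:eu-prod}: on components with $q_i>3$ it is all of $R_i^*$, while on components with $q_i=3$ it is $(1+M_i)\cup(R_i^*\setminus(1+M_i))=R_i^*$ as well once we take the union over even and odd $k$. Thus in every case $\langle R^{**}\rangle_{\times}$ contains $R_i^*$ in each component, hence contains $R^*$, and therefore the subring generated by $R^{**}$ contains the subring generated by $R^*$.

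The second step is then immediate: the subring generated by $R^{**}$ lies between the subring generated by $R^*$ and $R$ itself, so $R$ is generated by its exceptional units if and only if it is generated by its units together with the observation that, conversely, any subring containing all exceptional units and closed under addition already contains $R^*$ by the previous paragraph. Hence ``$R$ is generated by $R^{**}$'' is equivalent to ``$R$ is generated by $R^{*}$'', which by Corollary \ref{cor:unit1} holds if and only if at most one $q_i$ equals $2$; but we have assumed every $q_i\ge 3$, so that condition is automatically satisfied, and we would be done too quickly --- so the genuine content must be re-examined. Re-reading the statement, the correct dichotomy is: if some $q_i=2$ were allowed we would have $R^{**}=\emptyset$, which is why that case is excluded; within the range $q_i\ge 3$, the claim is that the obstruction now comes from the value $3$ playing the role that $2$ played for units. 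So the real work is to show directly that $\sum_{k\ge 1}\sum_{i=1}^k R^{**}=R$ if and only if at most one $q_i$ equals $3$.

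For that, the key step is to invoke Theorem \ref{thm:eu-sum}. If at most one $q_i$ equals $3$ and none equals $4$ we are in case (i) or case (ii) with $s=1$, and in both situations $\sum_k R^{**}$ eventually exhausts $R$ (in case (ii) with $s=1$ one unions the three residue classes $M_1$, $1+M_1$, $2+M_1$ over $k\equiv 0,1,2\pmod 3$ to recover $R_1$); similarly cases (iii) and (iv) with the relevant parameters equal to $1$ give all of $R$ after unioning over $k$. Conversely, if at least two $q_i$ equal $3$, say $q_1=q_2=3$, then by Theorem \ref{thm:eu-sum}(ii) or (iv) the first two components of $\sum_{i=1}^k R^{**}$ always lie in the \emph{same} coset ($M_i$, $1+M_i$, or $2+M_i$) dictated solely by $k\bmod 3$, so the union over all $k$ of $\sum_{i=1}^k R^{**}$ only ever reaches the ``diagonal'' union $(M_1\oplus M_2)\cup(1+M_1\oplus 1+M_2)\cup(2+M_1\oplus 2+M_2)$ in those two slots --- a proper subset of $R_1\oplus R_2$ since $q_1=q_2=3>1$ means $M_1\oplus M_2 \subsetneq R_1\oplus R_2$ and the three cosets of $M_1\oplus M_2$ reached are only three of the nine cosets of $M_1\oplus M_2$ in $R_1\oplus R_2$. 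The main obstacle is purely bookkeeping: correctly tracking, across the six residue classes of $k\bmod 6$ in case (iv), which coset each $q_i=3$ component and each $q_i=4$ component occupies, and verifying that the union over $k$ fills $R$ exactly when there is at most one index of each exceptional type; I expect no conceptual difficulty beyond this case analysis, since the hard analytic input (the exact formulas) is already supplied by Theorems \ref{thm:phi} and \ref{thm:theta}.
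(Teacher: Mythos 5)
There is a genuine gap, and it occurs at the exact point where you talked yourself out of the correct approach. Your first paragraph's multiplicative step contains an error: from Theorem \ref{thm:eu-prod}(ii), when two or more $q_i$ equal $3$ the $k$-fold product set places \emph{all} of those components simultaneously in $1+M_i$ (for $k$ even) or simultaneously in $R_i^*\setminus(1+M_i)$ (for $k$ odd); the parity of $k$ is shared across components, so a unit that is $\equiv 1 \pmod{M_1}$ but $\not\equiv 1\pmod{M_2}$ is never a product of exceptional units. Taking the union over $k$ ``componentwise'' does not yield $R^*$. Having (correctly) sensed that something was wrong, you then replaced the statement to be proved by ``$\bigcup_{k}\sum_{i=1}^{k}R^{**}=R$ iff at most one $q_i$ equals $3$,'' but this reformulation is false and is \emph{not} equivalent to the corollary. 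Being generated by $R^{**}$ as a ring means being a sum of \emph{products} of exceptional units, and products matter here: take $R=\F_4\oplus\F_4$. By Theorem \ref{thm:eu-sum}(iii) with $t=2$, the union over $k$ of $\sum_{i=1}^k R^{**}$ is a proper subset of $R$, yet by Theorem \ref{thm:eu-prod}(i) one has $R^{**}\cdot R^{**}=R^*$ and by Theorem \ref{thm:u-sum}(i) one has $R^*+R^*=R$, so $R$ \emph{is} generated by its exceptional units. Since the corollary places no restriction on how many $q_i$ equal $4$, your sufficiency argument cannot succeed: the set you propose to show equals $R$ is proper whenever $t\ge 2$.

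The paper's proof avoids both problems. For sufficiency it combines Theorem \ref{thm:eu-prod} (with at most one $q_i=3$ there is no cross-component coupling, so the union over $k$ of the product sets is all of $R^*$) with Theorem \ref{thm:u-sum}(i) (since every $q_i>2$, $R^*+R^*=R$), so every element is a sum of two units, each a product of exceptional units. For necessity it supposes $q_1=q_2=3$, passes to the quotient $R_1/M_1\oplus R_2/M_2\cong\F_3\oplus\F_3$, and observes via Theorem \ref{thm:eu-size} that this ring has the single exceptional unit $(2,2)$, which generates only the diagonal subring. Your own necessity sketch is also incomplete as written: showing that sums of exceptional units stay in the ``diagonal'' cosets does not by itself exclude generation, because the generated subring also contains products; you would additionally need to observe that the preimage of the diagonal of $\F_3\oplus\F_3$ is closed under multiplication (i.e., is a subring containing $R^{**}$), which is essentially the paper's quotient argument.
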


\subsection{Applications} 
\label{sec:app}

So far, there have been extensive studies on additive unit representations in global fields; 
see \cite{BFT} for a survey. 
Here, we apply our main results to the case of exceptional units in Dedekind domains.  






Let $D$ be a Dedekind domain. 
Notice that every exceptional unit of $D$ automatically yields an exceptional unit in $D/I$ for any ideal $I$ of $D$.  
By Corollary \ref{cor:exist1} (or noticing the simple fact that the binary field has no exceptional unit), we immediately have:  

\begin{corollary}  \label{cor:exist2}
If $D$ has a prime ideal of norm $2$, then $D^{**}=\emptyset$. 
\end{corollary}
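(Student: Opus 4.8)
The statement to prove is Corollary~\ref{cor:exist2}: if $\cO_F$ has a prime ideal of norm $2$, then $\cO_F^{**}=\emptyset$.

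\medskip

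The plan is to argue by contraposition through reduction modulo the prime ideal. Suppose $\p$ is a prime ideal of $\cO_F$ with norm $N(\p) = |\cO_F/\p| = 2$, so that $\cO_F/\p \cong \F_2$. First I would observe that reduction modulo $\p$ gives a ring homomorphism $\pi \colon \cO_F \to \cO_F/\p$ sending $1$ to $1$, and hence carrying units to units and exceptional units to exceptional units: if $u$ and $1-u$ are both units in $\cO_F$, then $\pi(u)$ and $1-\pi(u) = \pi(1-u)$ are both units in $\cO_F/\p$, so $\pi(u) \in (\cO_F/\p)^{**}$. This is exactly the observation recorded in the sentence preceding the corollary in the excerpt.

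\medskip

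The key step is then to note that the finite commutative ring $\cO_F/\p \cong \F_2$ is local with residue field $\F_2$, so in the notation of the paper it has $n = 1$ and $q_1 = 2$. By Corollary~\ref{cor:exist1} (equivalently, directly from Theorem~\ref{thm:eu-size}, which gives $|(\cO_F/\p)^{**}| = m_1(q_1 - 2) = 0$), the ring $\cO_F/\p$ has no exceptional units at all: $(\cO_F/\p)^{**} = \emptyset$. Concretely, in $\F_2$ the only unit is $1$, and $1 - 1 = 0$ is not a unit, so there is nothing exceptional. Combining this with the previous paragraph: if $\cO_F^{**}$ contained some element $u$, then $\pi(u)$ would be an exceptional unit of $\cO_F/\p$, contradicting $(\cO_F/\p)^{**} = \emptyset$. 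Hence $\cO_F^{**} = \emptyset$.

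\medskip

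There is essentially no obstacle here; the corollary is an immediate consequence of Corollary~\ref{cor:exist1} applied to the quotient ring together with the functoriality of the exceptional-unit condition under unital ring homomorphisms. The only point requiring a word of care is to confirm that $\pi$ is surjective onto a ring in which the hypotheses of the earlier structural results apply — but $\cO_F/\p$ is a field, hence certainly a finite commutative ring with identity, and the decomposition into local rings is trivial in this case. So the write-up will be just a few lines.
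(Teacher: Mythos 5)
Your proof is correct and follows exactly the paper's route: the paper likewise observes that reduction modulo an ideal sends exceptional units of $\cO_F$ to exceptional units of the quotient, and then applies Corollary~\ref{cor:exist1} to $\cO_F/\p \cong \F_2$, which has $q_1 = 2$ and hence no exceptional units.
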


In view of quadratic number fields, one can see that the condition in Corollary \ref{cor:exist2} is sufficient but not necessary. 
In fact, by definition it is easy to see that the only exceptional units in quadratic fields 
are the roots of the four polynomials:  
$$
X^2 -X+1, \quad X^2 -3X+1, \quad X^2 -X-1, \quad X^2 +X-1. 
$$
They correspond to the quadratic fields $\Q(\sqrt{-3})$, $\Q(\sqrt{5})$. 
For each of the two fields, the ring of integers is generated by its exceptional units.

Using Corollaries \ref{cor:eu-sum} and  \ref{cor:exist2}, we directly obtain:  

\begin{corollary}  \label{cor:exunit}
Not every element of $D$ is a sum of its exceptional units if one of the following conditions holds:
\begin{itemize}
\item $D$ has a prime ideal of norm $2$; 

\item $D$ has at least two prime ideals of norm $3$; 

\item $D$ has at least two prime ideals of norm $4$. 
\end{itemize}
\end{corollary}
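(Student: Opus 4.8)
The plan is to reduce everything to the local rings $R_i$ and then invoke the two corollaries that precede it. The statement of Corollary~\ref{cor:exunit} lists three sufficient conditions on $\cO_F$, and the passage from a global condition to a local (modulo a power of a prime ideal) condition is exactly the remark made just above: \emph{every exceptional unit of $\cO_F$ reduces to an exceptional unit in $\cO_F/I$}, so if $c \in \cO_F$ were a sum of $k$ exceptional units of $\cO_F$ then $c \bmod I$ would be a sum of $k$ exceptional units of $\cO_F/I$. Hence it suffices, for each of the three conditions, to exhibit a finite quotient $\cO_F/I$ and an element $c$ of it that is \emph{not} a sum of any number of exceptional units, and then lift $c$ back to $\cO_F$ arbitrarily.

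First I would handle the first bullet. If $\cO_F$ has a prime ideal $\p$ of norm $2$, take $I=\p$, so $\cO_F/\p$ is a local ring (a field, in fact) with residue field of size $q=2$. By Corollary~\ref{cor:exist1} (equivalently Corollary~\ref{cor:exist2}), $(\cO_F/\p)^{**}=\emptyset$, so \emph{no} element of $\cO_F/\p$ — in particular the image of, say, any unit — is a sum of exceptional units; pulling back, that element of $\cO_F$ is not a sum of exceptional units. This is immediate and needs nothing beyond Corollary~\ref{cor:exist2}, which is already derived in the excerpt.

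Next, the second and third bullets are parallel and use Theorem~\ref{thm:eu-sum}. Suppose $\cO_F$ has two distinct prime ideals $\p_1,\p_2$ of norm $3$; set $I=\p_1\p_2$. Then $\cO_F/I \cong \cO_F/\p_1 \oplus \cO_F/\p_2$ is a product of two local rings, each with $q_i=3$, so it falls under Theorem~\ref{thm:eu-sum}(ii) with $s=2$; the theorem's ``In particular'' clause says every element is a sum of exceptional units if and only if $s=1$, hence with $s=2$ there is an element $c$ of $\cO_F/I$ not representable as such a sum. Lift $c$ to $\cO_F$; it is not a sum of exceptional units of $\cO_F$. The norm-$4$ case is identical, using two local rings with $q_i=4$, Theorem~\ref{thm:eu-sum}(iii), and its ``$t=1$'' criterion with $t=2$.

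I do not anticipate a genuine obstacle here: the corollary is a direct packaging of the local results via the reduction map, and all the arithmetic input (the classification of finite commutative rings as sums of local rings, and the explicit ``if and only if'' conditions) is already in place. The one point worth stating carefully is the reduction step itself — that a sum of $k$ exceptional units maps under $\cO_F \to \cO_F/I$ to a sum of $k$ exceptional units, and that a prime of norm $p$ (resp.\ a product of two such primes) gives a quotient whose local-ring decomposition has the residue-field sizes claimed — but both are routine. So the write-up is essentially: fix $I$ appropriately in each of the three cases, apply Corollary~\ref{cor:exist1} or the relevant part of Theorem~\ref{thm:eu-sum} to $\cO_F/I$, and transport the obstruction back along the reduction.
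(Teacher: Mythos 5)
Your proposal is correct and follows exactly the route the paper intends: reduce modulo $\p$ (resp.\ $\p_1\p_2$), use the remark that exceptional units of $\cO_F$ map to exceptional units of the quotient, and invoke Corollary~\ref{cor:exist2} for the norm-$2$ case and the ``$s=1$''/``$t=1$'' criteria of Theorem~\ref{thm:eu-sum}(ii),(iii) for the other two. The paper gives no further detail (it states the corollary as a direct consequence), so your write-up is simply a fleshed-out version of the same argument.
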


The following is a direct consequence of Corollary \ref{cor:exunit2}. 

\begin{corollary}  \label{cor:exist3}
$D$ cannot be generated by its exceptional units if $D$ has at least two prime ideals of norm $3$. 
\end{corollary}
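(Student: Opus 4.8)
The plan is to reduce this assertion about the infinite ring $\cO_F$ to a statement about a finite quotient and then quote Corollary \ref{cor:exunit2}. Suppose $\cO_F$ has two distinct prime ideals $\p_1\neq\p_2$, each of norm $3$. Since every nonzero prime of $\cO_F$ is maximal, $\p_1$ and $\p_2$ are comaximal, so by the Chinese Remainder Theorem
$$
R:=\cO_F/(\p_1\p_2)\;\cong\;\cO_F/\p_1\,\oplus\,\cO_F/\p_2\;\cong\;\F_3\oplus\F_3 .
$$
In the notation of the introduction, $R$ is a direct sum of two local rings (here fields), each with $m_i=1$ and $q_i=3$; in particular every $q_i\ge 3$, so Corollary \ref{cor:exunit2} applies, and because two of the $q_i$ equal $3$ it yields that $R$ is \emph{not} generated by its exceptional units.

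Next I would transport this obstruction back to $\cO_F$. Let $\pi\colon\cO_F\twoheadrightarrow R$ be the canonical surjection. If $u$ is an exceptional unit of $\cO_F$, then $\pi(u)$ is a unit of $R$ (a ring homomorphism carries units to units) and $1-\pi(u)=\pi(1-u)$ is likewise a unit, so $\pi(u)\in R^{**}$. Hence $\pi$ maps the subring of $\cO_F$ generated by $\cO_F^{**}$ into the subring of $R$ generated by $R^{**}$. Consequently, if $\cO_F$ were generated by its exceptional units, then applying $\pi$ and using surjectivity would force $R$ to be generated by its exceptional units, contradicting the previous paragraph. Therefore $\cO_F$ cannot be generated by its exceptional units.

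Since this is a direct corollary there is no genuine difficulty; the only point requiring (minor) care is the descent step, namely that "being generated by exceptional units" passes along the surjection $\cO_F\twoheadrightarrow\cO_F/(\p_1\p_2)$, which rests on the elementary fact just noted. As a sanity check one may observe that by Theorem \ref{thm:eu-size} the ring $\F_3\oplus\F_3$ has exactly $1\cdot(3-2)\cdot 1\cdot(3-2)=1$ exceptional unit, namely $(2,2)$, and the subring it generates is the diagonal copy of $\F_3$, which is proper.
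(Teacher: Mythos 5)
Your proof is correct and follows exactly the route the paper intends: the paper offers no written proof beyond calling the statement "a direct consequence of Corollary \ref{cor:exunit2}", together with its earlier observation that exceptional units of $\cO_F$ descend to exceptional units of any quotient $\cO_F/I$. You have simply filled in the details (CRT to get $\F_3\oplus\F_3$, and the descent of "generated by exceptional units" along the surjection), and done so correctly.
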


We remark that Corollary \ref{cor:exist3} can be proved by the following simple argument without using 
Corollary \ref{cor:exunit2}: if $P$ and $Q$ are two prime ideals of norm $3$, 
then every element of $D$, which is congruent to $0$ modulo $P$ and to $1$ modulo $Q$, 
cannot be a sum of exceptional units or a sum of products of exceptional units, 
because the only exceptional unit modulo $P$  is $2$.  

The condition in Corollary \ref{cor:exist3} is also only sufficient. 

\begin{example}
Choose $D$ to be the ring of integers of the quadratic field $\Q(\sqrt{21})$. 
Then, the prime $2$ is inert and the prime $3$ is  ramified in $D$. 
By Corollary \ref{cor:exunit2}, for any non-zero ideal $I$ the quotient ring  $D/I$ is generated by its exceptional units. 
However, $D$ has no exceptional unit. 
\end{example}

\section{Proofs}
\label{Prof} 

\subsection{Proof of Corollary \ref{cor:u-sum}}

(i)
Notice that for any $c=(c_1,\ldots,c_n) \in R$, if $q_i > 2$, then 
$$
\mu_{2,R_i}(c_i) >0. 
$$ 
This together with Theorem \ref{thm:psi} implies the result in (i). 

(ii)
Now, assume that $q_i=2$, then by Theorem \ref{thm:psi}, we have 
\begin{equation} \label{eq:mu}
\mu_{k,R_i}(c_i)= \begin{cases}
1+(-1)^{k}&  \textrm{if $c_i \in M_i$,} \\
1+(-1)^{k+1} & \textrm{if $c_i \in R_i \setminus M_i$.}
\end{cases}
\end{equation}
Then, letting $k$ be even and applying \eqref{eq:mu}, we obtain 
$$
\sum_{i=1}^{k} R^* =  (\oplus_{i=1}^{s} M_i ) \oplus  (\oplus_{j>s} R_j).
$$

Similarly, letting $k$ be odd and using \eqref{eq:mu} we obtain the second identity.

\subsection{Proof of Theorem \ref{thm:eu-sum}}

(i)
Notice that given $c=(c_1,\ldots,c_n) \in R$, for each $i=1, \ldots, n$, 
$$
\rho_{2,R_i}(c_i) > 0 \quad \textrm{if $q_i > 4$.}
$$ 
This together with Theorem \ref{thm:phi} implies the result in (i). 

(ii)
If $q_i=3$, we have 
$$
\rho_{2,R_i}(c_i) = 3\left(\sum_{\substack{j=0 \\ j \equiv c_i \,\textrm{\rm (mod $M_i$)}}}^{2}\binom{2}{j} -1\right) , \quad \textrm{for any $c_i\in R_i$}.
$$
As $q_i=3$, the residue classes modulo $M_i$ can be represented by $0,1,2$ respectively. 
So, $\rho_{2,R_i}(c_i) \ne 0$ if and only if $c_i \in 1+M_i $. 
Then, using Theorem \ref{thm:phi} we get 
\begin{equation}  \label{eq:32}
R^{**} + R^{**} = (\oplus_{i=1}^{s} 1+M_i )  \oplus (\oplus_{j>s} R_j). 
\end{equation}
We again have 
$$
\rho_{3,R_i}(c_i) = 3\left(\sum_{\substack{j=0 \\ j \equiv c_i \,\textrm{\rm (mod $M_i$)}}}^{3}\binom{3}{j} -3\right) , \quad \textrm{for any $c_i \in R_i$}.
$$
Thus, $\rho_{3,R_i}(c_i) \ne 0$ if and only if $c_i \in M_i$. 
So,  by Theorem \ref{thm:phi} we obtain  
\begin{equation} \label{eq:33}
R^{**} + R^{**} + R^{**} = (\oplus_{i=1}^{s} M_i)  \oplus (\oplus_{j>s} R_j).
\end{equation}
Then, combining \eqref{eq:32} with \eqref{eq:33} we get the identities in (ii).

(iii) 
If $q_i = 4$, for even $k$ we have 
$$
\rho_{k,R_i}(c_i) = 4\sum_{\substack{j=0 \\ j \equiv c_i \,\textrm{\rm (mod $M_i$)}}}^{k}\binom{k}{j} , \quad \textrm{for any $c_i \in R_i$}. 
$$
So, $\rho_{k,R_i}(c_i) \ne 0$ if and only if $c_i\in M_i$ or $c_i \in 1+M_i$, where one should note that the characteristic of the residue field $R_i / M_i$ is $2$ 
(because $q_i=4$). 
Thus, by using Theorem \ref{thm:phi} we get the first identity in (iii).

For the second identity, we note that  for odd $k$, we have 
$$
\rho_{k,R_i}(c_i) = 4\left(\sum_{\substack{j=0 \\ j \equiv c_i \,\textrm{\rm (mod $M_i$)}}}^{k}\binom{k}{j} - 2^{k-1}\right) , \quad \textrm{for any $c_i \in R_i$}.
$$
Since the characteristic of the residue field $R_i / M_i$ is $2$, if $c_i \in M_i$, then $j \equiv c_i \equiv 0 \,\textrm{\rm (mod $M_i$)}$ for any even integer $j$. 
Also, if $c_i  \in 1+M_i$, then $j \equiv c_i \equiv 1\,\textrm{\rm (mod $M_i$)}$ for any odd integer $j$. 
Thus, $\rho_{k,R_i}(c_i) \ne 0$ if and only if $c_i \not\in M_i$ and $c_i \not\in 1+ M_i$. 
So, similarly we obtain the second identity. 

(iv)
The desired results in (iv) follow directly from (ii) and (iii).

\subsection{Proof of Theorem \ref{thm:theta}}

Applying the same arguments as in \cite{Miguel}, we have the following two lemmas, whose proofs we omit. 

\begin{lemma}\label{le1}
For any $u=(u_1,\ldots, u_n) \in R^*$, we have 
$$
\theta_{k,R}(u) = \prod_{i=1}^{n} \theta_{k,R_i}(u_i). 
$$
\end{lemma}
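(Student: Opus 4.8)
The plan is to exploit the fact that the decomposition $R = R_1 \oplus \cdots \oplus R_n$ renders both the multiplication law and the exceptional-unit condition componentwise, so that counting factorizations of $u$ over $R$ splits as a product of the corresponding counts over the $R_i$. Concretely, I would construct an explicit bijection between $\Theta_{k,R}(u)$ and $\prod_{i=1}^{n} \Theta_{k,R_i}(u_i)$ and then pass to cardinalities.

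First I would record the structural fact that $R^{**}$ itself decomposes as a product. For an element $a = (a_1, \ldots, a_n) \in R$ with $a_i \in R_i$, inversion in a direct sum is performed slotwise, so $a \in R^*$ if and only if $a_i \in R_i^*$ for every $i$. Since $1 - a = (1 - a_1, \ldots, 1 - a_n)$, the same observation applied to $1-a$ gives that $1 - a \in R^*$ if and only if $1 - a_i \in R_i^*$ for every $i$. Combining the two yields $a \in R^{**}$ if and only if $a_i \in R_i^{**}$ for all $i$; that is, under the natural identification $R = R_1 \times \cdots \times R_n$ we have $R^{**} = R_1^{**} \times \cdots \times R_n^{**}$.

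Next I would combine this with the componentwise multiplication. Given $(x_1, \ldots, x_k) \in (R^{**})^k$, write $x_j = (x_{j,1}, \ldots, x_{j,n})$ with $x_{j,i} \in R_i$. The product $x_1 \cdots x_k$ is computed slot by slot, so the single equation $x_1 \cdots x_k = u$ is equivalent to the $n$ simultaneous equations $x_{1,i} x_{2,i} \cdots x_{k,i} = u_i$, one for each $i = 1, \ldots, n$, while the membership constraints $x_j \in R^{**}$ are, by the previous paragraph, equivalent to the constraints $x_{j,i} \in R_i^{**}$. Reindexing via the map $(x_1, \ldots, x_k) \mapsto \big( (x_{1,i}, \ldots, x_{k,i}) \big)_{i=1}^{n}$ therefore defines a bijection from $\Theta_{k,R}(u)$ onto $\prod_{i=1}^{n} \Theta_{k,R_i}(u_i)$, with inverse given by reassembling the slots. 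Taking cardinalities gives $\theta_{k,R}(u) = \prod_{i=1}^{n} \theta_{k,R_i}(u_i)$.

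There is no genuine obstacle here: the whole content is that being an exceptional unit and forming a product are both componentwise conditions, so the argument is pure direct-sum (Chinese-Remainder-style) bookkeeping. The only step warranting an explicit line of justification is the equivalence $a \in R^{**} \iff a_i \in R_i^{**}$ for all $i$, which as noted follows immediately from the fact that both the unit condition and the map $a \mapsto 1 - a$ respect the direct-sum decomposition.
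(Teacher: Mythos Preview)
Your proposal is correct: the componentwise decomposition of $R^{**}$ and of multiplication in $R$ immediately yields the desired bijection, and this is precisely the standard direct-sum bookkeeping the paper has in mind when it omits the proof and refers to \cite{Miguel}. There is nothing to add.
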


\begin{lemma}\label{le2}
For each $i=1, \ldots, n$ and for any unit $u_i \in R_i^*$ we have 
 $$
 \theta_{k,R_i}(u_i) = m_i^{k-1} \theta_{k,R_i/M_i}(u_i).
 $$
 \end{lemma}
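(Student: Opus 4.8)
\textbf{Proof proposal for Lemma \ref{le2}.}

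The plan is to reduce the count of factorizations of $u_i$ into $k$ exceptional units of the local ring $R_i$ to the same count in the residue field $R_i/M_i$, the extra factor $m_i^{k-1}$ accounting for the freedom inside the maximal ideal. First I would record the basic observation that an element $x \in R_i$ is a unit precisely when its image $\bar x$ in $R_i/M_i$ is nonzero, and that $x$ is an exceptional unit precisely when both $\bar x \ne 0$ and $\overline{1-x} = 1 - \bar x \ne 0$; hence reduction modulo $M_i$ carries $R_i^{**}$ onto $(R_i/M_i)^{**}$, and the fibre over each exceptional unit of the residue field has exactly $m_i$ elements (the coset structure of $M_i$).

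Next I would set up the counting map. Given $(x_1,\dots,x_k) \in (R_i^{**})^k$ with $x_1\cdots x_k = u_i$, reduce modulo $M_i$ to get $(\bar x_1,\dots,\bar x_k) \in ((R_i/M_i)^{**})^k$ with $\bar x_1 \cdots \bar x_k = \bar u_i$. I would then show this induced map $\Theta_{k,R_i}(u_i) \to \Theta_{k,R_i/M_i}(\bar u_i)$ is surjective with all fibres of the same size $m_i^{k-1}$. For surjectivity and the fibre count: fix a factorization $\bar x_1 \cdots \bar x_k = \bar u_i$ in the residue field. Choosing lifts $x_1,\dots,x_{k-1}$ of $\bar x_1,\dots,\bar x_{k-1}$ to $R_i^{**}$ can be done in $m_i^{k-1}$ ways (each of the first $k-1$ factors ranges freely over its coset, all of which consist of exceptional units by the previous paragraph). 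Once $x_1,\dots,x_{k-1}$ are fixed, the last factor is forced: $x_k = u_i (x_1\cdots x_{k-1})^{-1}$, which is a well-defined unit, and one checks it lies in $R_i^{**}$ because its reduction is $\bar x_k \in (R_i/M_i)^{**}$. So every fibre has exactly $m_i^{k-1}$ elements, giving $\theta_{k,R_i}(u_i) = m_i^{k-1}\, \theta_{k,R_i/M_i}(\bar u_i)$, which (identifying $u_i$ with its image, as the statement does) is the claim.

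The one point requiring a little care — and the only real obstacle — is the verification that every lift of an exceptional unit of $R_i/M_i$ to $R_i$ is again an exceptional unit, i.e.\ that being exceptional is detected modulo $M_i$. This is exactly where local-ring structure is used: $R_i \setminus M_i = R_i^*$, so $x \in R_i^*$ iff $x \notin M_i$ iff $\bar x \ne 0$, and likewise $1 - x \in R_i^*$ iff $\overline{1-x}\ne 0$; combining the two, $x \in R_i^{**}$ iff $\bar x \ne 0$ and $\bar x \ne 1$, a condition depending only on $\bar x$. Everything else is the routine bijection/fibre-counting bookkeeping sketched above, and the analogous reasoning (with $1+M_i$ in place of the relevant cosets) is presumably what underlies the displayed formula for $\theta_{k,R_i/M_i}$ in Theorem \ref{thm:theta}, obtained by the same character-sum computation as in \cite{Miguel} applied to the multiplicative group of the finite field $R_i/M_i$.
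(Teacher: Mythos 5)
Your proof is correct and is exactly the standard lifting/fibre-counting argument: since $R_i$ is local, membership in $R_i^{**}$ depends only on the residue modulo $M_i$, so the reduction map $\Theta_{k,R_i}(u_i)\to\Theta_{k,R_i/M_i}(\bar u_i)$ is surjective with every fibre of size $m_i^{k-1}$ (free lifts of the first $k-1$ coordinates, the last forced). The paper omits its own proof, merely citing the analogous argument of Miguel for sums, and your write-up supplies precisely that argument in the multiplicative setting.
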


 Let $\F_q$ be a finite field of $q$ elements. 
Recall that a multiplicative character $\chi$ of $\F_q^*$ is a homomorphism from $\F_q^*$ to the complex roots of unity. 
The trivial character $\chi_0$ is the one sending every element of $\F_q^*$ to $1$. 
Let $G_q$ be the group of multiplictive characters of $\F_q^*$, and let $G_q^* = G_q \setminus \{\chi_0\}$. 
Then, $|G_q| = q-1$. 
Furthermore, we have the following orthogonality relations (for instance, see \cite{Lidl}): 
\begin{equation*} 
\sum_{a\in \F_q^*}\chi(a)=\begin{cases}
q-1 & \textrm{if $\chi=\chi_0$,}   \\
0&  \textrm{otherwise;}
\end{cases}
\end{equation*}
and 
\begin{equation*} 
\sum_{\chi \in G_q}\chi(a)=\begin{cases}
q-1 & \textrm{if $a=1$,}   \\
0&  \textrm{otherwise.}
\end{cases}
\end{equation*}

\begin{proof}[Proof of Theorem \ref{thm:theta}] 
Based on the above lemmas, we only need to calculate $\theta_{k,\F_q}(c)$ for $c \in \F_q^*$ and $q >2$.  
Using the above formulas about multiplicative characters, we obtain 
\begin{align*}
\theta_{k,\F_q}(c)
&=|\{(x_{1},\ldots,x_{k})\in \big(\F_q^{**}\big)^{k} :x_{1}x_2 \cdots x_{k} = c \}| \\
&=\sum_{x_{1}\in \F_q^{**}}\sum_{x_{2}\in \F_q^{**}}\cdots\sum_{x_{k}\in \F_q^{**}}
\frac{1}{q-1}\sum_{\chi \in G_q}\chi(x_{1}\cdots x_{k}/c)\\
&=\frac{1}{q-1}\sum_{\chi \in G_q}\Big(\sum_{x_{1}\in \F_q^{**}}\chi(x_{1})\Big)\cdots\Big(\sum_{x_{k}\in \F_q^{**}}\chi(x_{k})\Big)\chi(c^{-1})\\
&=\frac{1}{q-1}\Big(\sum_{\chi \in G_q^{*}}\Big(\sum_{x_{1}\in \F_q^{**}}\chi(x_{1})\Big)\cdots\Big(\sum_{x_{k}\in \F_q^{**}}\chi(x_{k})\Big)\chi(c^{-1}) +(q-2)^{k}\Big)\\
& \qquad (\textrm{since}~\F_q^{**}=\F_q^*\setminus\{1\}).
\end{align*}
Notice that for any $\chi \in G_q^{*}$, we have 
$$
0=\sum_{x_{i}\in \F_q^*}\chi(x_{i})=1+ \sum_{x_{i}\in \F_q^{**}}\chi(x_{i}).
$$
Then, we further have 
\begin{align*}
\theta_{k,\F_q}(c)
&=\frac{1}{q-1}\Big((q-2)^{k} + (-1)^k\sum_{\chi \in G_q^{*}}\chi(c^{-1})\Big)\\
&= \begin{cases} 
\frac{1}{q-1}\left((q-2)^{k} + (-1)^{k}(q-2)\right)&  \textrm{if $c = 1$,} \\
\frac{1}{q-1}\left((q-2)^{k} + (-1)^{k+1}\right)& \textrm{if $c \ne 1$.} 
\end{cases}
\end{align*}
This, together with Lemma \ref{le1} and Lemma \ref{le2}, implies the desired result. 
\end{proof}

\subsection{Proof of Theorem \ref{thm:eu-prod}}

(i)
Notice that given $u=(u_1,\ldots,u_n) \in R^*$, for each $i=1, \ldots, n$, we have 
$$
\sigma_{2,R_i}(u_i) > 0 \quad \textrm{if $q_i > 3$.}
$$ 
This together with Theorem \ref{thm:theta} implies the result in (i). 

(ii)
If $q_i=3$, we have 
\begin{equation}   \label{eq:sigma}
\sigma_{k,R_i}(u_i)= \begin{cases}
1+(-1)^{k} &  \textrm{if $u_i \in 1+ M_i$,} \\
1+(-1)^{k+1}& \textrm{if $u_i \not\in 1+ M_i$.}  
\end{cases}
\end{equation}
Then, letting $k$ be even and applying \eqref{eq:sigma}, we get 
$$
\prod_{i=1}^{k}R^{**} = (\oplus_{i=1}^{s} 1+M_i)  \oplus (\oplus_{j>s} R_j^*). 
$$
This completes the proof of the first identity. 

Similarly, letting $k$ be odd and using \eqref{eq:sigma} we obtain the second identity.

\subsection{Proof of Corollary \ref{cor:exunit2}}

The sufficient part follows directly from Corollary \ref{cor:u-sum} (i) and Theorem \ref{thm:eu-prod}. 

For the necessary part, we suppose that 
 $q_1 = q_2 = 3$. 
By assumption, the ring $R$ is generated by its exceptional units. 
Then, the ring $R_1 \oplus R_2$ is also generated by its exceptional units, and so is the ring $R_1/M_1 \oplus R_2/M_2$. 
On the other hand, since both finite fields $R_1/M_1$ and $R_2/M_2$ have only three elements, 
 we in fact have 
$$
(R_1/M_1 \oplus R_2/M_2)^{**}=\{(2,2)\}, 
$$ 
which generates the subset 
$$
\{(0,0),(1,1),(2,2)\}. 
$$
So, the ring $R_1/M_1 \oplus R_2/M_2$ cannot be generated by its unique exceptional unit. 
This leads to a contradiction.

\section*{Acknowledgment} 
The authors would like to thank the referees for careful reading and valuable comments. 
This work was supported by the National Natural Science Foundation  of China, Grant No. 11501212. 
The research of Min Sha was supported by the Macquarie University Research Fellowship.

\bibliography{central}

\end{document}